\theoremstyle{plain}
\newtheorem{theorem}{Theorem}
\newtheorem*{thm}{Theorem}
\newtheorem*{proposition}{Proposition}
\newtheorem{lemma}{Lemma}
\theoremstyle{definition}
\newtheorem*{acknowledgement}{Acknowledgement}
\newtheorem{question}{Open question}
\theoremstyle{remark}
\newtheorem*{remark}{Remark}
\newenvironment{proof}[1][Proof]{\noindent\textbf{#1.} }{\ \rule{0.5em}{0.5em}}
\newcommand{\nc}{\newcommand}
\newcommand{\df}{\def}
\nc{\cit}[2][]{\ifthenelse{\equal{#1}{}}{{\rm \cite{#2}}}{{\rm \cite[#1]{#2}}}}
\newlength{\JR@setLa}
\newlength{\JR@setLb}
\newlength{\JR@setLc}
\newlength{\JR@setLd}
\newlength{\JR@setLe}
\nc{\JR@setMta}{}
\nc{\JR@setMtb}{}
\nc{\JR@setMtc}{}
\nc{\JR@setMtd}{}
\nc{\JR@setMt}[1]{
\ifthenelse{#1=1}{\JR@setMta}{}
\ifthenelse{#1=2}{\JR@setMtb}{}
\ifthenelse{#1=3}{\JR@setMtc}{}
\ifthenelse{#1=4}{\JR@setMtd}{}
}
\nc{\JR@max}[3]{\ifthenelse{\lengthtest{\the#1>\the#2}}
           {\setlength{#3}{#1}}
           {\setlength{#3}{#2}}}
\nc{\JR@setds}{}
\nc{\JR@setSD}[4][]{
 \ifthenelse{\equal{#1}{}}
   {\renewcommand{\JR@setds}{}}
   {\renewcommand{\JR@setds}{\displaystyle}}
 \settoheight{\JR@setLa}{\ensuremath{\JR@setds{#2}}}
 \settoheight{\JR@setLb}{\ensuremath{\JR@setds{#3}}}
 \settodepth{\JR@setLc}{\ensuremath{\JR@setds{#2}}}
 \settodepth{\JR@setLd}{\ensuremath{\JR@setds{#3}}}
 \settowidth{\JR@setLe}{\ensuremath{\JR@setds{\left.\right.}}}
 \JR@max{\JR@setLc}{\JR@setLd}{\JR@setLc}
 \JR@max{\JR@setLa}{\JR@setLb}{\JR@setLa}
 \addtolength{\JR@setLa}{\JR@setLc}
 \ifthenelse{#4=1}{\renewcommand{\JR@setMta}{\rule[-\JR@setLc]{0pt}{\JR@setLa}}}{}
 \ifthenelse{#4=2}{\renewcommand{\JR@setMtb}{\rule[-\JR@setLc]{0pt}{\JR@setLa}}}{}
 \ifthenelse{#4=3}{\renewcommand{\JR@setMtc}{\rule[-\JR@setLc]{0pt}{\JR@setLa}}}{}
 \ifthenelse{#4=4}{\renewcommand{\JR@setMtd}{\rule[-\JR@setLc]{0pt}{\JR@setLa}}}{}
}
\df\SetEx[#1,#2](:#3|#4:){
 \JR@setSD[#2]{#3}{#4}{#1}
 \left\{\JR@setMt{#1} #3\right.\left|\rule{.9\JR@setLe}{0pt}#4 \JR@setMt{#1}\right\}
}
\nc{\set}[1][1]{\SetEx[#1,ds]}
\nc{\seti}[1][1]{\SetEx[#1,]}
\let\pdfoutput=\undefined\fi
\begin{document}

\author{Jo\"{e}l Rouyer}
\title{Steinhaus conditions for convex polyhedra}
\maketitle

\begin{abstract}
On a convex surface $S$, the antipodal map $F$ associates to a point $p$ the
set of farthest points from $p$, with respect to the intrinsic metric. $S$ is
called a Steinhaus surface if $F$ is a single-valued involution. We prove that
any convex polyhedron has an open and dense set of points $p$ admitting a
unique antipode $F_{p}$, which in turn admits a unique antipode $F_{F_{p}}$,
distinct from $p$. In particular, no convex polyhedron is Steinhaus.

\end{abstract}

Keywords: convex polyhedra, antipodal map.

MSC 2010: 52B10, 51A15, 53C45

\bigskip

\section{Introduction}

By a \emph{polyhedron}, denoted by $P$, we mean the boundary of a compact and
convex polyhedron in $\mathbb{R}^{3}$. $P$ is naturally endowed with its
\emph{intrinsic metric}: the distance between two points is the length of the
shortest curve joining them. In this paper, we shall never consider the
extrinsic distance. A \emph{segment} is by definition a shortest path on the
polyhedron between its endpoints. In general, it is not a line segment of
$\mathbb{R}^{3}$, but becomes so if one unfolds the faces it crosses onto a
same plane. An \emph{antipode} of $p$ is a farthest point from $p$; the set of
antipodes of $p$ is denoted by $F_{p}$. It is well-known that the mapping $F$
is upper semicontinuous. When the context makes clear that $F_{p}$ is a
singleton, we shall not distinguish between this singleton and its only element.

The study of antipodes on convex surfaces began with several questions of H.
Steinhaus, reported in \cite{CHF}, most of them answered by Tudor Zamfirescu,
see \eg {} \cite{Z1}, \cite{Z2}, \cite{Z4}, \cite{Z3}. However, one of those
questions had remained open a little longer: does the fact that the antipodal
map of a convex surface is a single-valued involution imply that the surface
is a round sphere? As we shall see, the answer is negative. By definition,
such a surface will be called a \emph{Steinhaus surface}.

The first family of Steinhaus surfaces was discovered by C. V\^{\i}lcu
\cite{V1}. It consists of centrally symmetric surfaces of revolution, and
includes the ellipsoids having two axes equal, and the third shorter than the
two equal ones. (Note that if the third axis is longer than the two equal
ones, the surface is no longer Steinhaus \cite{VZ1}.) Other examples were
discovered afterward: cylinders of small height \cite{IV3}, and the boundaries
of intersections of two solid balls, provided that the part of the surface of
the smaller ball included in the bigger one does not exceed a hemisphere. This
last example, as well as the mentioned family, were generalized to
hypersurfaces in $\mathbb{R}^{n}$ \cite{IRV1}.

Note that Steinhaus conditions are related to another one. Define the
\emph{radius} at a point $p$ as the distance between $p$ and its antipodes. It
is known that, if some surface has a constant radius map, then it is a
Steinhaus surface \cite{VZ1}. Moreover, all examples of Steinhaus surfaces
hitherto discovered also satisfy the constant radius condition. So it is still
open whether the two conditions are equivalent.

One can also notice that all known examples are surfaces of revolution, and in
particular, are not polyhedral. The first attempt to find a polyhedral example
was the investigation of the regular tetrahedron. An explicit computation of
the antipodal map proved that it is not a Steinhaus surface \cite{JR1}. Then
we proved that no tetrahedra can be Steinhaus \cite{JR4}. A few years later,
we proved that no polyhedron can satisfy the radius condition, and that no
centrally symmetric polyhedron can be Steinhaus \cite{JR5}. Since then, the
problem of the (non)existence of Steinhaus polyhedra was very natural. The aim
of this paper to prove they do not exist.

In order to make this article self-contained, we give in Section \ref{S2} some
preliminary, concerning the antipodes on a convex polyhedron. In Section
\ref{S3}, we prove the result. Then, in a last section, we discuss a few open
questions related to this topic.

\section{Preliminaries\label{S2}}

The explicit computation of the antipodal map in the case of the regular
tetrahedron \cite{JR1} was generalized -- as much as possible -- to arbitrary
convex polyhedra \cite{JR4},\cite{JR5}. We proved there the following theorem. \addtocounter{theorem}{-1}

\begin{theorem}
\label{T0}Any convex polyhedron $P$ can be written as a disjoint union%
\[
P=\Gamma\left(  P\right)  \cup\bigcup_{i=1}^{N}Z_{i}%
\]

with the following properties.

\begin{enumerate}
\item The sets $Z_{i}$ are open, and the restricted maps $F|Z_{i}$ are singled-valued.

\item The set $\Gamma\left(  P\right)  $ is a finite union of algebraic arcs,
of degree at most $10$. It includes the edges of $P$.

\item The map $F|Z_{i}$ is a constant map if and only if its image is a
singleton containing a vertex.

\item If $\operatorname{Im}\left(  F|Z_{i}\right)  =\left\{  v\right\}  $,
then there is exactly one segment between $v$ and any point of $Z_{i}$;
otherwise, there are exactly three segments between $p\in Z_{i}$ and $F_{p}$.
\end{enumerate}
\end{theorem}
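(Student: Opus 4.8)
The plan is to partition $P$ according to the combinatorial type of the segment configuration realizing the antipodal map, and to show that on each piece of maximal type the map $F$ is pinned down by an explicit algebraic system. First I would record the local structure of a farthest point. Fix $p$, let $q\in F_{p}$, and write $d_{p}=d(p,\cdot)$. When $q$ is not a vertex its neighbourhood is flat, and if the segments from $p$ to $q$ arrive at $q$ with unit directions $w_{i}$ (oriented from $p$ toward $q$), then the first variation of arc length gives, for a tangent displacement $v$ at $q$,
\[
d_{p}(q+v)=d_{p}(q)+\min_{i}\langle w_{i},v\rangle+o(|v|).
\]
Since $q$ maximizes $d_{p}$ we need $\min_{i}\langle w_{i},v\rangle\le 0$ for every $v$, i.e. the $w_{i}$ lie in no open half-plane, which in the plane forces at least three directions (generically exactly three). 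When $q$ \emph{is} a vertex the tangent cone has total angle $<2\pi$, so a single segment can already fold up all outgoing directions and make $q$ a local maximum. This dichotomy is the geometric content of claims 3 and 4.

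Next I would encode segments combinatorially. A segment from $p$ to $q$ crosses a finite \emph{edge sequence} $\sigma$; unfolding the faces along $\sigma$ into one plane by the associated composition of edge-reflections $\Phi_{\sigma}$, which is an isometry and hence affine of degree one, turns the segment into the straight segment from $\Phi_{\sigma}(p)$ to $q$ of length $|\Phi_{\sigma}(p)-q|$. Because a shortest path is simple and meets each open face in at most one subarc, only finitely many edge sequences occur, so there are finitely many possible configurations: a triple $(\sigma_{1},\sigma_{2},\sigma_{3})$ in the non-vertex case, or a single $\sigma_{1}$ together with a target vertex. I would then define $Z_{i}$ to be a maximal connected set of points $p$ for which $F_{p}$ is a single non-vertex realized by exactly three segments of a fixed triple of sequences, together with the sets on which $F_{p}$ is a fixed vertex reached by a fixed single sequence; these latter sets give the constant restrictions of claim 3.

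On such a piece the antipode becomes explicit. Placing $q$ in its face and unfolding the three sequences back toward $p$ produces three planar images $p_{j}=\Phi_{\sigma_{j}}(p)$, all at the common distance $R=d_{p}(q)$ from $q$; hence $q$ is the \emph{circumcenter} of the triangle $p_{1}p_{2}p_{3}$, and the maximality condition above becomes the requirement that $q$ lie in the interior of that triangle. Since each $p_{j}$ is an affine image of $p$ and the circumcenter is a rational function of degree two of its three arguments, the assignment $p\mapsto F_{p}=q$ is a single explicit rational map on $Z_{i}$, which yields the single-valuedness of claim 1; the interior condition is open, giving the openness of $Z_{i}$, and in the single-segment regime $q$ is forced to be the fixed vertex, confirming claims 3 and 4.

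Finally, $\Gamma(P)$ is the locus where the combinatorial type degenerates: points with more than one antipode (two circumcenter functions coincide while giving equal, maximal distance), points where the circumcenter leaves its face or reaches a side of the triangle $p_{1}p_{2}p_{3}$ (the transition between the three-segment and the single-segment/vertex regimes), and the edges of $P$ themselves. Each such condition equates or specializes the degree-two circumcenter expressions built from the affine maps $\Phi_{\sigma_{j}}$, so eliminating the coordinates of $q$ by resultants produces plane-algebraic arcs. I expect the degree count leading to the bound $10$, together with the verification that these algebraic conditions genuinely exhaust $\Gamma(P)$ and that no pathology survives on an open set (infinitely many segments, or a continuum of antipodes), to be the main obstacle, since it requires both the combinatorial finiteness input and a careful case-by-case control of the elimination.
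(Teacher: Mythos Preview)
Your outline follows the paper's own sketch almost point for point: the three-segment criterion at a non-vertex antipode (you use first variation where the paper uses an obtuse-triangle comparison, but the conclusion is the same), the unfolding of the three edge sequences, the identification of $F_p$ as the circumcenter of the three planar images of $p$, the finiteness of edge sequences giving finitely many candidate rational maps $\tau_k$, and the description of the zones as the loci where a fixed $\tau_k$ (or a fixed vertex) realizes the maximum.

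One point deserves tightening. Your description of $\Gamma(P)$ as the locus where ``two circumcenter functions coincide'' is not the right equation: when $p$ has two distinct antipodes the two circumcenters are \emph{different} points at the \emph{same} distance from $p$. The arcs in $\Gamma(P)$ are cut out by equalities of the squared-distance functions $\delta_k(p)=\lvert p-\tau_k(p)\rvert^2$, namely $\delta_k=\delta_{k'}$, $\delta_k=d(\cdot,v)^2$, and $d(\cdot,v)^2=d(\cdot,w)^2$ for vertices $v,w$; clearing the quadratic denominators in $\tau_k$ is what produces the bound $10$. Your additional conditions (circumcenter leaving its face, or hitting a side of the triangle $p_1p_2p_3$) describe where a given $\tau_k$ ceases to be a \emph{valid} candidate, but the zone boundary is governed by which valid candidate is \emph{largest}, and that is captured by the distance equalities above. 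With this adjustment your plan matches the paper's argument.
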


From now on, the sets $Z_{i}$ defined in the above theorem will be referred to
as \emph{zones}.

\begin{remark}
Including a priori the edges in $\Gamma\left(  P\right)  $ is only a trick to
simplify the proof and ensure that any zone is (isometric to) a plane domain.
In general, nothing special happens while crossing an edge: if $Z$ and
$Z^{\prime}$ are two zones separated by an edge and one rotates the face of
$Z$ onto the plane of $Z^{\prime}$, then $F|Z$ and $F|Z^{\prime}$ become
rational prolongations of each other.

More generally, the notion of edges belongs to the extrinsic geometry and
plays no role in a purely intrinsic problem.
\end{remark}

The proof of Theorem \ref{T0} is given in details in \cite{JR4} and
\cite{JR5}, we present next only its rough idea. The fourth point of the
theorem has been stated for aim of completeness only. It will not be used
here, so we shall omit its proof which involves extensive computation.

A first remark is that a segment cannot pass through any vertex, see for
instance \cite{BCS}.

Secondly, note that a point $p$ on the polyhedron is joined to any of its
antipodes $q$ by at least three segments, provided that $q$ is not a vertex.
Suppose on the contrary that there are only two segments $\sigma_{1}$ and
$\sigma_{2}$ between $p$ and $q$. They would determine two sectors at point
$q$, one of them having measure at least $\pi$ (if there is only one segment,
there is only one sector, measuring $2\pi$). Consider a point $r$ on the
bisector of this sector, tending to $q$. A segment $\sigma$ between $p$ and
$r$ should tend to either $\sigma_{1}$ or $\sigma_{2}$, say $\sigma_{1}$. For
$r$ close enough to $q$, the triangle composed of $\sigma_{1}$, $\sigma$, and
the only segment between $r$ and $q$ contains no vertex, and so, is a (folded)
Euclidean triangle. Moreover the angle at $q$ is obtuse or right; it follows
that the length of $\sigma$ (which is also $d\left(  p,r\right)  $) is greater
than the length of $\sigma_{1}$ (which is supposed to be the radius at $p$),
and we get a contradiction.

Hence there exist three segments $\sigma_{-1}$, $\sigma_{0}$, $\sigma_{1}$
between $p$ and $q$. Let $\mathcal{F}^{\prime}$ be the face of $p$ and
$\mathcal{F}$ be the face of $q$; if one unfolds the union of the faces
crossed by $\sigma_{i}$ ($i=-1$, $0$, $1$) onto the plane of $\mathcal{F}$,
one obtains three images of the face $\mathcal{F}^{\prime}$, say
$\mathcal{F}_{0}$, $\mathcal{F}_{-1}$ and $\mathcal{F}_{1}$. One passes from
$\mathcal{F}_{0}$ to $\mathcal{F}_{i}$ ($i=\pm1$) by a planar affine direct
isometry $f_{i}$. Since the segments $\sigma_{i}$ have the same length,
$q=\mathrm{cc}\left(  p_{0},f_{-1}\left(  p_{0}\right)  ,f_{1}\left(
p_{0}\right)  \right)  $, where $p_{0}$ is the point of $\mathcal{F}_{0}$
corresponding to $p$ and $\mathrm{cc}\left(  ~,~,~\right)  $ stands for the
circumcenter (see Figure \ref{F1}).

There is only a finite number of ways to unfold sequences of faces, inducing a
finite number of pairs of isometries $f_{\pm1}$, leading to a finite number of
maps $\tau_{k}:$ $p\mapsto\mathrm{cc}\left(  p_{0},f_{-1}\left(  p_{0}\right)
,f_{1}\left(  p_{0}\right)  \right)  $. For each zone $Z$ the function $F|Z$
is either one $\tau_{k}$ or a constant map whose value is a vertex. Let
$\delta_{k}\left(  p\right)  $ be the square of the distance between $p$ and
$\tau_{k}\left(  p\right)  $. The algebraic arcs that compose $\Gamma\left(
P\right)  $ are parts of the locus of those points such that $\delta
_{k}\left(  p\right)  =\delta_{k^{\prime}}\left(  p\right)  $ for some indices
$k$ and $k^{\prime}$, such that $\delta_{k}\left(  p\right)  =d\left(
p,v\right)  ^{2}$ for some index $k$ and some vertex $v$, or such that
$d\left(  p,v\right)  ^{2}=d\left(  p,w\right)  ^{2}$ for some vertices $v$
and $w$. A\ straightforward computation shows that they have degree at most
$10$. (Note that, in the only case that have been explicitly solved -- the
regular tetrahedron -- the higher degree terms vanish and the actual degree
drops to $4$ \cite{JR1}. So it is not entirely clear that $10$ can be achieved.)

\bigskip\begin{figure}[tb]
\begin{center}
\includegraphics[
scale=.4
]{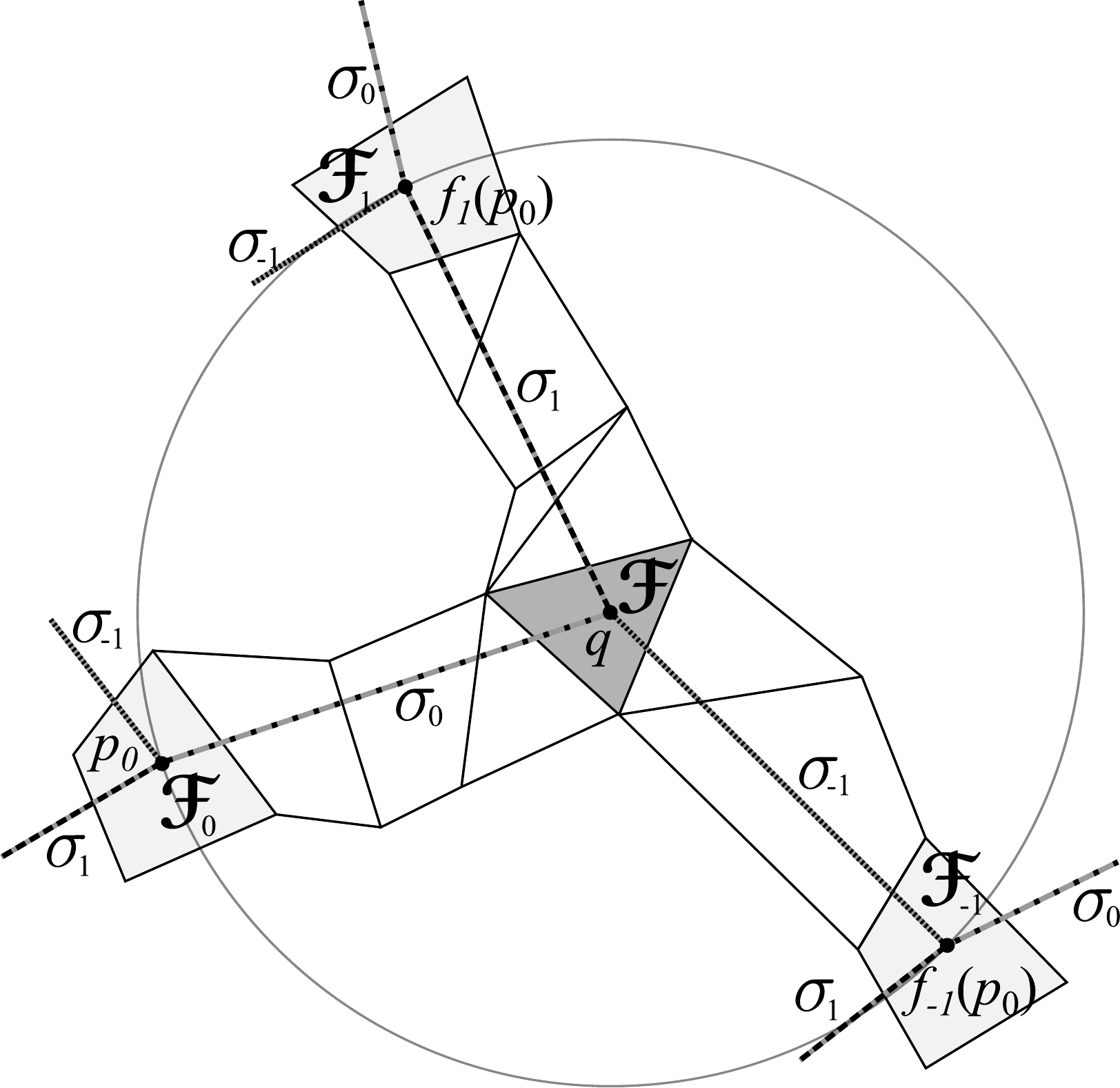}
\end{center}
\caption{Unfolding of $P$ in the proof of Theorem \ref{T0}.}%
\label{F1}%
\end{figure}

For further reasoning, it is necessary to notice that $f_{1}$ and $f_{-1}$
cannot be both translations (Lemma 1 in \cite{JR5}). Moreover, if either
$f_{1}$ or $f_{-1}$ is a translation then, by interchanging the roles of
$F_{0}$ and $F_{1}$, we obtain that $f_{-1}$ and $f_{1}$ are both rotations,
of the same angle. Hence, we can assume without loss of generality that
$f_{\pm1}$ are both rotations. Now, a straightforward computation proves the
following lemma.

\begin{lemma}
\label{L1} \cit{JR4} Let $P$ be a convex polyhedron, let $Z$ be a zone of $P$,
and assume that $F|Z$ is not constant. Endow the plane of $Z$ and the plane of
$\operatorname{Im}\left(  F|Z\right)  $ with orthonormal frames and let
$\left(  x,y\right)  $ be the coordinates of $p\in Z$. The coordinates of
$F_{p}$ are given by the formula:%
\begin{equation}
F\left(  x,y\right)  =\frac{\left(  X\left(  x,y\right)  ,Y\left(  x,y\right)
\right)  }{\varepsilon\left(  x^{2}+y^{2}\right)  +L\left(  x,y\right)
}\text{,}\label{3}%
\end{equation}
where $\varepsilon\in\left\{  0,1\right\}  $, $L$, $X$, $Y\in\mathbb{R}\left[
x,y\right]  $, $\deg\left(  X\right)  \leq2$, $\deg\left(  Y\right)  \leq2$,
$\deg\left(  L\right)  \leq1$. Moreover, the zero set of the denominator is
neither empty nor reduced to a single point.
\end{lemma}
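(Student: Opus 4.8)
The plan is to work in the complex plane, using the description of $F|Z$ recalled just before the lemma. After identifying each of the two planes with $\mathbb{C}$ via its orthonormal frame, write $z=x+iy$ for the point $p\in Z$. Since $F|Z$ is non-constant it equals one of the maps $\tau_k$, so $F_p$ is the circumcenter of the three points $p_0,\ f_1(p_0),\ f_{-1}(p_0)$, where $p_0$ is the image of $p$ under the unfolding isometry $g_0$ (a direct isometry of the plane) and $f_{\pm1}$ are the two rotations. Setting $A=g_0(z)$, $B=f_1(A)$, $C=f_{-1}(A)$, and writing each direct isometry in the form $w\mapsto\zeta w+\eta$ with $|\zeta|=1$, I would record $A=az+a'$, $B=bz+b'$, $C=cz+c'$ with $a,b,c$ of modulus $1$ and $a',b',c'\in\mathbb{C}$. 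The modulus-$1$ condition is what drives the entire computation.

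For the formula I would obtain the circumcenter $O$ of $A,B,C$ as the solution of the two perpendicular-bisector equations
\[
O(\bar B-\bar A)+\bar O(B-A)=|B|^2-|A|^2,\qquad O(\bar C-\bar A)+\bar O(C-A)=|C|^2-|A|^2,
\]
treating $O$ and $\bar O$ as unknowns and applying Cramer's rule. The key observation is that, because $|a|=|b|=|c|=1$, one has $|A|^2=x^2+y^2+2\operatorname{Re}(a\bar{a'}z)+|a'|^2$ and likewise for $B,C$; hence the quadratic part $x^2+y^2$ cancels in the differences $|B|^2-|A|^2$ and $|C|^2-|A|^2$, which are therefore affine in $(x,y)$. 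The numerator of Cramer's formula, $N=(|B|^2-|A|^2)(C-A)-(|C|^2-|A|^2)(B-A)$, is then a product of two affine functions, hence of degree at most $2$. The Cramer denominator $D=(\bar B-\bar A)(C-A)-(B-A)(\bar C-\bar A)$ equals minus its own conjugate, so it is purely imaginary; writing $D=i\,\delta(x,y)$ with $\delta$ real, one computes that the pure quadratic part of $\delta$ is $-2\operatorname{Im}\big((b-a)\overline{(c-a)}\big)\,(x^2+y^2)$ and the remainder is affine. Since $O=-iN/\delta$, the real and imaginary parts of $-iN$ give polynomials of degree at most $2$; dividing numerator and denominator by the (real) leading coefficient of $\delta$ yields $\varepsilon=1$ when that coefficient is nonzero, while if it vanishes $\delta$ is already affine and $\varepsilon=0$. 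Either way one reaches the stated form with $\deg L\le1$.

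The part I expect to be the real obstacle is the final assertion, that the zero set of the denominator is neither empty nor a single point; here I would argue geometrically rather than by computing a discriminant. Since $\delta$ is, up to a nonzero constant, the signed area of the triangle $A\,f_1(A)\,f_{-1}(A)$, its zero set is exactly the locus of points $A$ at which these three points are collinear. The fixed point $w_1$ of the rotation $f_1$ lies in this locus, for at $A=w_1$ one has $f_1(A)=A$ and the triangle degenerates; the same holds for the fixed point $w_{-1}$ of $f_{-1}$. It remains to see that $w_1\neq w_{-1}$. If instead $w_1=w_{-1}=:w$, then $A$, $f_1(A)$ and $f_{-1}(A)$ would all lie at distance $|A-w|$ from $w$, so $w$ would be equidistant from the three points for \emph{every} $A$, forcing the circumcenter to be constantly $w$ and contradicting the hypothesis that $F|Z$ is not constant. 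Hence $w_1\neq w_{-1}$, and transporting back through the isometry $g_0$ shows that the zero set of the denominator contains the two distinct points $g_0^{-1}(w_1)$ and $g_0^{-1}(w_{-1})$. A set of the form $\{\varepsilon(x^2+y^2)+L=0\}$ containing two distinct points can be neither empty nor a singleton (it is a circle when $\varepsilon=1$ and a line when $\varepsilon=0$), which is the claim. Along the way one checks that $\delta\not\equiv0$, which is automatic since $F_p$ is a genuine finite point for $p\in Z$, so the denominator cannot vanish identically.
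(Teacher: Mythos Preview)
Your argument is correct and matches the paper's approach: the circumcenter computation yields the stated rational form, and the zero set of the denominator is handled by observing that the centers of the rotations $f_{\pm1}$ lie in it (the paper records, just after the lemma, that this zero set is the line through the two centers when $\varepsilon=0$, or the circle through them and the center of $f_{-1}\circ f_1^{-1}$ when $\varepsilon=1$). Your use of complex coordinates and the signed-area interpretation of the Cramer determinant is a clean packaging of the ``straightforward computation'' the paper defers to an earlier reference, and your justification that $w_1\neq w_{-1}$ via non-constancy of $F|Z$ makes explicit a detail the paper leaves implicit.
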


Indeed, $\varepsilon=0$ if and only if the function $f_{\pm1}$ are rotations
of the same angle. In this case, the zero set of the denominator of (\ref{3})
is the line through the centers of $f_{1}$ and $f_{-1}$. If the angles of the
rotations are distinct (\ie, $\varepsilon=1$), the zero set of the denominator
is the circle through the centers of $f_{1}$, $f_{-1}$, and $f_{-1}\circ
f_{1}^{-1}$.

\begin{lemma}
\label{L2}With the notation of Lemma \ref{L1}, in the case $\varepsilon=0$,
there exists orthonormal frames of the planes of $Z$ and $F_{Z}$ such that
\begin{align}
X\left(  x,y\right)   &  =2xy\cos\theta+\left(  x^{2}-y^{2}-1\right)
\sin\theta\nonumber\\
Y\left(  x,y\right)   &  =(1-\cos\theta)\left(  x+y\cot\frac{\theta}%
{2}-1\right)  \left(  x+y\cot\frac{\theta}{2}+1\right) \label{2}\\
L\left(  x,y\right)   &  =2y\text{.}\nonumber
\end{align}

\end{lemma}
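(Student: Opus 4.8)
The plan is to start from the explicit description of the antipodal map obtained in the proof of Theorem \ref{T0}: for $p\in Z$ with unfolded position $p_{0}$ one has $F_{p}=\mathrm{cc}\left(p_{0},f_{-1}(p_{0}),f_{1}(p_{0})\right)$, and in the case $\varepsilon=0$ the two direct isometries $f_{\pm1}$ are rotations of a common angle $\theta$. I would pass to a complex coordinate $z=x+iy$ for $p_{0}$ in the unfolded plane, and spend the freedom in the choice of frames to place the two rotation centers symmetrically on the real axis, normalized so that they sit at $+1$ and $-1$. With this choice $f_{1}(z)=e^{i\theta}z+(1-e^{i\theta})$ and $f_{-1}(z)=e^{i\theta}z-(1-e^{i\theta})$, so that the two edge vectors of the triangle factor cleanly as
\[
\beta:=f_{-1}(p_{0})-p_{0}=(e^{i\theta}-1)(z+1),\qquad \gamma:=f_{1}(p_{0})-p_{0}=(e^{i\theta}-1)(z-1).
\]

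Next I would write the circumcenter $w=u+iv$ as the solution of the two perpendicular-bisector equations $2\operatorname{Re}(\bar w\,\beta)=|f_{-1}(p_{0})|^{2}-|p_{0}|^{2}$ and $2\operatorname{Re}(\bar w\,\gamma)=|f_{1}(p_{0})|^{2}-|p_{0}|^{2}$, and solve this linear system by Cramer's rule. Its determinant is $\operatorname{Im}(\bar\beta\gamma)$; using the factorizations above one finds $\operatorname{Im}(\bar\beta\gamma)=4(1-\cos\theta)\,y$, which is affine and vanishes exactly on the line through the two centers. This is also where the statement $\varepsilon=0$, $\deg(L)\le1$ of Lemma \ref{L1} becomes transparent: the quadratic part $x^{2}+y^{2}$ cancels both in this determinant and in each right-hand side $|f_{\pm1}(p_{0})|^{2}-|p_{0}|^{2}$, the latter being affine because $f_{\pm1}$ are isometries. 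Expanding the two Cramer numerators then gives $u$ and $v$ as quotients with common denominator $4(1-\cos\theta)\,y$. Since $F=(X,Y)/L$ is unchanged when $X,Y,L$ are multiplied by a common nonzero constant, I would rescale so that $L=2y$, which is the stated denominator.

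The last step is to recognize the two numerators as $X$ and $Y$ in (\ref{2}). For the first coordinate, collecting terms yields the coefficient of $xy$ equal to $\sin^{2}\theta-(1-\cos\theta)^{2}=2\cos\theta\,(1-\cos\theta)$, and dividing through by the common factor $2(1-\cos\theta)$ leaves precisely $2\cos\theta\,xy+\sin\theta\,(x^{2}-y^{2}-1)$. For the second coordinate the numerator comes out as a perfect difference of squares, $(1-\cos\theta)\bigl((x+y\cot\tfrac{\theta}{2})^{2}-1\bigr)$, once $\sin\theta/(1-\cos\theta)$ is replaced by $\cot\tfrac{\theta}{2}$; factoring gives the product form of $Y$. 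Reading off $X$, $Y$, $L$ then completes the identification.

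I expect no deep conceptual obstacle here: the lemma is in essence a single circumcenter computation, and the two places needing care are (i) the normalization putting the centers at $\pm1$, which is exactly what the choice of frames buys, and (ii) checking that this one unfolded frame already realizes the normal form, so that no further rigid change of the image frame is required. The most error-prone part is the trigonometric bookkeeping in the final step, but it is routine once the half-angle identity $\cot\tfrac{\theta}{2}=\sin\theta/(1-\cos\theta)$ is in hand. As a consistency check against the remark following Lemma \ref{L1}, I would note that the denominator $L=2y$ indeed vanishes precisely on the line through the two rotation centers.
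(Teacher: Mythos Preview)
Your proposal is correct and follows essentially the same route as the paper: normalize the two rotation centers to $\pm1$, write down the perpendicular-bisector equations for the circumcenter of $p_{0},f_{-1}(p_{0}),f_{1}(p_{0})$, solve the resulting $2\times2$ linear system, and simplify with half-angle identities. The only cosmetic difference is that you package the computation in complex coordinates while the paper works directly with real $(x,y)$ and the substitutions $u=\cos\theta-1$, $v=\sin\theta$; the underlying computation and the choice of frames are identical.
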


\begin{proof}
Let $\theta$ be the angle common to both rotations. Choose the orthonormal
direct frame in the plane of $\operatorname{Im}\left(  F|Z\right)  $ in such a
way that the coordinates of the center of $f_{i}$ are $\left(  i,0\right)  $
($i=\pm1$). In the plane of $Z$, choose the frame in which the coordinates of
$p\in\mathcal{F}$ equal the coordinates of $p_{0}\in\mathcal{F}_{0}$. Let
$\left(  x,y\right)  $ be the coordinates of $p$; the coordinates of
$f_{i}\left(  p_{0}\right)  $ are
\[
\left(  \left(  x-i\right)  \cos\theta-y\sin\theta+i,y\cos\theta\left(
x-i\right)  \sin\theta\right)  \text{.}%
\]
The equation in coordinates $\left(  \xi,\psi\right)  $ of the mediator of the
segment $p_{0}f_{i}\left(  p_{0}\right)  $ is%
\begin{align}
&  \left(  x\left(  \cos\theta-1\right)  -y\sin\theta-i\left(  \cos
\theta-1\right)  \right)  \xi\nonumber\\
+ &  \left(  y\left(  \cos\theta-1\right)  +(x-i)\sin\theta\right)
\psi\nonumber\\
+ &  iy\sin\theta+\left(  \cos\theta-1\right)  \left(  1-ix\right)
=0\text{.}\label{4}%
\end{align}
In order to shorten the formulas, we put $u\overset{\mathrm{def}}{=}\cos
\theta-1=-2\sin^{2}\frac{\theta}{2}$ and $v\overset{\mathrm{def}}{=}\sin
\theta=2\sin\frac{\theta}{2}\cos\frac{\theta}{2}$. The half difference and
half sum of the two equations given by (\ref{4}) ($i=\pm1$) are respectively%
\[%
\begin{array}
[c]{rrr}%
u\xi & +v\psi & =-ux+vy\\
\left(  ux-vy\right)  \xi & +\left(  uy+vx\right)  \psi & =-u\text{.\hspace
{0.9cm}}%
\end{array}
\]
It follows that
\begin{align*}
\xi &  =\frac{\left(  -ux+vy\right)  \left(  uy+vx\right)  +uv}{\left(
uy+vx\right)  u-\left(  ux-vy\right)  v}\\
&  =\frac{\left(  v^{2}-u^{2}\right)  xy+uv\left(  y^{2}-x^{2}+1\right)
}{y\left(  u^{2}+v^{2}\right)  }=\frac{X\left(  x,y\right)  }{2y}%
\end{align*}%
\begin{align*}
\psi &  =\frac{\left(  -ux+vy\right)  \left(  ux-vy\right)  +u^{2}}{v\left(
ux-vy\right)  -u\left(  uy+vx\right)  }\\
&  =\frac{-u^{2}x^{2}+2uvxy-v^{2}y^{2}+u^{2}}{2yu}\\
&  =\frac{u\left(  1-x+\frac{v}{u}y\right)  \left(  1+x-\frac{v}{u}y\right)
}{2y}=\frac{Y\left(  x,y\right)  }{2y}\text{.}%
\end{align*}

\end{proof}

\section{Main result\label{S3}}

Now, we are in a position to prove the claimed result. The idea of proof is
very simple: we just have to prove that the inverse of a function of the form
(\ref{3}) cannot be of this form.

\begin{thm}
Any convex polyhedron $P$ contains an open and dense set $D$ such that $F|D$
is single-valued, $F|F_{D}$ is single-valued, and $F_{F_{x}}\neq x$ for all
$x\in D$. Consequently, no polyhedron satisfies, even locally, Steinhaus conditions.
\end{thm}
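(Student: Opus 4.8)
The plan is to prove the contrapositive at the level of zones: I will show that the set where $F$ is a local involution is nowhere dense, so that the claimed $D$ can be obtained by removing a closed nowhere dense set from the union of zones. By Theorem \ref{T0}, $\bigcup_i Z_i=P\setminus\Gamma(P)$ is open and dense and $F$ is single-valued there. Let $Z^{\ast}$ be the union of the \emph{non-constant} zones. On a constant zone $F$ takes a vertex value, which lies in $\Gamma(P)$, so such points cannot meet the requirement that $F|F_D$ be single-valued and must be discarded; the density of the eventual $D$ thus rests on the non-constant zones being dense, which I would extract from the structure in Theorem \ref{T0}. Granting this, the first reduction is to prove that $F\circ F\neq\mathrm{id}$ on every nonempty open subset of a non-constant zone. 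The set $D$ is then $Z^{\ast}$ with two closed nowhere dense sets removed: $F^{-1}(\Gamma(P))$, which is nowhere dense because $\Gamma(P)$ is a finite union of arcs and $F|Z^{\ast}$ is a nonconstant real-analytic (indeed rational) map, and the involution locus, once it is shown to be nowhere dense.

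For the core, fix a non-constant zone $Z$ and suppose, for contradiction, that $F\circ F=\mathrm{id}$ on a nonempty open $U\subseteq Z$ with $F(U)$ inside a non-constant zone $Z'$. Writing $\phi=F|Z$ and $\psi=F|Z'$, Lemma \ref{L1} gives that both are of the form (\ref{3}), and $\psi\circ\phi=\mathrm{id}$ on $U$; since rational maps agreeing on an open set agree identically, $\psi=\phi^{-1}$. The observation I would build on is that $\phi$ is a circumcenter map $\phi(p)=\mathrm{cc}(p,f_1p,f_{-1}p)$ with $f_{\pm1}=R_{\theta_{\pm1},c_{\pm1}}$, and its inverse is geometrically transparent: $\phi(p)=q$ forces $|q-p|=|f_i^{-1}(q)-p|$, so $p$ lies on the perpendicular bisector of $q$ and $f_i^{-1}(q)$, which is the line through $c_i$ in the direction $R_{-\theta_i/2}(q-c_i)$. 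Intersecting the two bisectors exhibits $\phi^{-1}$ again as a circumcenter map, namely $\phi^{-1}(q)=\mathrm{cc}\bigl(q,R_{-\theta_1,c_1}q,R_{-\theta_{-1},c_{-1}}q\bigr)$, which is once more of the form (\ref{3}). This is exactly why no crude degree count suffices and why the matter reduces to comparing two admissible presentations: the slogan that ``the inverse cannot be of this form'' has to be read as the statement that $\phi^{-1}$ cannot coincide with the \emph{genuine} antipodal map of the image zone.

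The heart of the argument is therefore to confront the computed $\phi^{-1}$ with the independent description of $\psi=F|Z'$ arising from the unfolding at $q$. The three segments realizing $d(p,q)$ are the same whether read from $p$ or from $q$, so the isometries $g_{\pm1}$ unfolding $Z'$ are the reverse compositions of the $f_{\pm1}$; their turning angles are $-\theta_{\pm1}$, but their centers $c'_{\pm1}$ are the fixed points of the reverse unfoldings, expressed in the frame of $\mathcal{F}'$ rather than that of $\mathcal{F}$. The plan is to put both presentations into the normal form of Lemma \ref{L2} (together with its $\varepsilon=1$ analogue) and then to read off, from the leading and sub-leading coefficients of (\ref{3}), that the identity $\psi=\phi^{-1}$ would force $c'_{\pm1}=c_{\pm1}$ together with relations among the angles and centers that cannot all hold. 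The clause in Lemma \ref{L1} that the zero set of the denominator is neither empty nor a single point is what closes off the degenerate escape routes (a translational or constant denominator), which otherwise would correspond to $\phi^{-1}$ collapsing to an affine or projective map rather than a true antipodal map.

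I expect the principal obstacle to be the case $\varepsilon=1$ of distinct rotation angles. There the pole locus of $\phi$ is a genuine circle and, by the perpendicular-bisector description, so is that of $\phi^{-1}$ (its denominator has leading part proportional to $x^2+y^2$, since the intersection degenerates exactly where the two bisectors become parallel); consequently the highest-degree terms of the two form-(\ref{3}) presentations match and yield no contradiction. One is then forced down to the lower-order coefficients, where the bookkeeping distinguishing the forward centers $c_i$ from the reverse-unfolding centers $c'_i$ becomes delicate, and this is where the extensive but elementary computation lives. The equal-angle case $\varepsilon=0$ should be comparatively clean: the denominators are linear, and the explicit normal form (\ref{2}) can be inverted directly and compared, providing both a warm-up and a check on the coefficient relations used in the harder case.
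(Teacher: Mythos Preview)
Your overall strategy matches the paper's: reduce to a nonempty open $U$ in a non-constant zone with $F_U$ in a non-constant zone, pass to the rational continuations $G,G'$ of $F|U,F|F_U$, and derive a contradiction from $G'\circ G=\mathrm{id}$. You also correctly observe that $\phi^{-1}$ is again a circumcenter map $q\mapsto\mathrm{cc}(q,f_1^{-1}q,f_{-1}^{-1}q)$, so a naive ``inverse cannot have form~(\ref{3})'' argument fails.

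Where you diverge is in the mechanism of contradiction, and here the paper is far simpler than what you plan. You propose to put both $\psi$ and $\phi^{-1}$ in the normal form of Lemma~\ref{L2} (and its $\varepsilon=1$ analogue) and compare coefficients, anticipating that the $\varepsilon=1$ case will require ``extensive but elementary computation'' on lower-order terms. The paper avoids all of this:
\begin{itemize}
\item If \emph{either} pole set $C$ or $C'$ is a circle, the paper uses a purely global argument. First, $\operatorname{Im}G\cap C'=\emptyset$ (else $G'\circ G$ would blow up along a sequence with bounded limit). Then, since $\|G'\|\to\infty$ near $C'$, the image $G'(A)$ of each component $A$ of $\Pi'\setminus C'$ is unbounded, hence cannot lie in the bounded disc $D$ enclosed by $C$; thus $\operatorname{Im}G'\subset E$, the exterior. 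But any $x\in D$ satisfies $x=G'\!\circ G(x)\in E$, a contradiction. No coefficients, no normal form, no geometry of the reverse unfolding.
\item If both $C$ and $C'$ are lines ($\varepsilon=0$ on both sides), the paper does use Lemma~\ref{L2}, but not to invert and compare. It simply exhibits two distinct points $p_1=(\cos\theta,-\sin\theta)$ and $p_2=(-\cos\theta-2,\sin\theta)$ with $G(p_1)=G(p_2)=(1,0)$, so $G$ is not injective, contradicting $G'\circ G=\mathrm{id}$.
\end{itemize}
So the case you flagged as the principal obstacle is actually the cheapest, and the case you called a ``warm-up'' is handled by a non-injectivity trick rather than by inversion. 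Your plan to track the reverse-unfolding centers $c'_{\pm1}$ and match them against $c_{\pm1}$ is unnecessary and, as you yourself suspect, potentially delicate: since $\phi^{-1}$ genuinely has form~(\ref{3}), it is not a priori clear that coefficient comparison terminates in a contradiction rather than in a consistent identification.

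One further remark: your worry about constant zones is legitimate, and the paper's proof is terse on this point (its sentence ``there exists $U$ such that $F_U$ is wholly contained in a zone'' presupposes $F|U_1$ is not a vertex). But the fix is immediate and does not require knowing that non-constant zones are dense: if $F|U_1\equiv v$ for a vertex $v$, then $F_{F_p}=F_v$ is independent of $p\in U_1$, so $F_{F_p}=p$ for all $p$ in an open set is impossible; and if $F_v$ happens to be multi-valued one can still take $V=U_1\setminus\{\text{the finitely many values of }F_v\}$ only after noting that for the theorem's purposes one may relax ``$F|F_D$ single-valued'' to hold off a finite set, or argue separately. This is a minor bookkeeping issue compared to the main argument.
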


\begin{proof}
It is sufficient to prove that any open set $U_{0}\subset P$ contains an open
subset $V$ such that $F|V$ and $F|F_{V}$ are single-valued, and such that
$F_{F_{x}}\neq x$ for any $x\in V$. Then, the union of all those $V$ is a
suitable $D$.

Let $U_{0}$ be a fixed open set; by virtue of Theorem \ref{T0}, it contains a
subset $U_{1}$ which is included in a zone. Since $F|U_{1}$ is continuous,
there exists a smaller open set $U\subset U_{1}$ such that $F_{U}$ is wholly
contained in a zone too. Since $F\circ F|U$ is single-valued and thereby
continuous, the set $V$ of those points in $x\in U$ such that $F_{F_{x}}\neq
x$ is open. If it is non-empty, the proof is over; suppose on the contrary
that $F_{F_{p}}=\left\{  p\right\}  $ for all $p\in U$. Put $U^{\prime
}\overset{\mathrm{def}}{=}F_{U}$. Let $\Pi$ be the plane of the face
containing $U$ and $\Pi^{\prime}$ be the plane of the face containing
$U^{\prime}$. By Lemma \ref{L1}, $F|U$ and $F|U^{\prime}$ are rational and
admit natural continuations $G:\Pi\backslash C\rightarrow\Pi^{\prime}$ and
$G^{\prime}:\Pi^{\prime}\backslash C^{\prime}\rightarrow\Pi$ respectively,
where $C$ and $C^{\prime}$ are either a circle or a line. By hypothesis,
$G^{\prime}|U^{\prime}\circ G|U=id_{U}$; since $G$ and $G^{\prime}$ are
rational, $G^{\prime}\circ G\left(  p\right)  =p$ wherever $G^{\prime}\circ G$
is well defined.

Assume that $\operatorname{Im}G$ intersects $C^{\prime}$. Let $a$ be a point
of the boundary of $G^{-1}\left(  C^{\prime}\right)  $, and $p_{n}\in
\Pi\backslash\left(  C\cup G^{-1}\left(  C^{\prime}\right)  \right)  $ be a
point tending to $a$ when $n$ tends to infinity. Then $\left\Vert G^{\prime
}\circ G\left(  p_{n}\right)  \right\Vert $ tends to infinity, in
contradiction with $G^{\prime}\circ G\left(  p_{n}\right)  =p_{n}\rightarrow
a$. Hence $\operatorname{Im}G$ does not intersect $C^{\prime}$, and similarly
$\operatorname{Im}G^{\prime}$ does not intersect $C$.

Assume now that either $C$ or $C^{\prime}$ is a circle. Since $G$ and
$G^{\prime}$ play symmetrical roles, we can assume that $C$ is a circle. Let
$D$ be the interior of $C$ and $E$ its exterior; let $A$, $B$ be the two
connected components of $\Pi^{\prime}\backslash C^{\prime}$. Since $\left\Vert
G^{\prime}\left(  x\right)  \right\Vert $ tends to infinity when $x$ tends to
some points of $C^{\prime}$, $G^{\prime}\left(  A\right)  $ (resp. $G^{\prime
}\left(  B\right)  $) cannot be included in $D$. It follows that
$\operatorname{Im}G^{\prime}\subset E$. This contradict the fact that a point
$x\in D$ equals $G^{\prime}\circ G\left(  x\right)  $.

Now assume that $C$ and $C^{\prime}$ are both straight lines. By Lemma
\ref{L2}, one can assume that the expression of $G$ in Euclidean coordinates
is given by (\ref{3}) and (\ref{2}). Let $p_{1}\in\Pi$ be the point of
coordinates $\left(  \cos\theta,-\sin\theta\right)  $, $p_{2}\in\Pi$ be the
point of coordinates $\left(  -\cos\theta-2,\sin\theta\right)  $ and $q\in
\Pi^{\prime}$ the point of coordinates $\left(  1,0\right)  $. A direct
computation shows that%
\[
G\left(  p_{1}\right)  =G\left(  p_{2}\right)  =q\text{.}%
\]
Hence $p_{1}=G^{\prime}\left(  q\right)  =p_{2}$ and we get a contradiction.
\end{proof}

\section{Further questions}

In order to close the paper, we will mention a few open questions related to
the above result. The first two were already suggested in the introduction.

\begin{question}
Do there exist Steinhaus surfaces with non-constant radius?
\end{question}

\begin{question}
Do there exist Steinhaus surfaces without rotational symmetry?
\end{question}

\smallskip

Observe that the definition of Steinhaus surfaces combines two distinct
conditions: the fact that $F$ is single-valued does not imply that it is an
involution. The simplest known example is an ellipsoid of revolution whose
axes measure respectively $1$, $1$, and $a\in\left(  1,\sqrt{2}\right)  $. On
such an ellipsoid, $F$ is a homeomorphism, but not an involution \cite{VZ1}.
This naturally leads to the following question.

\begin{question}
Does there exist a convex polyhedron such that $F$ is single-valued? Such that
$F$ is a homeomorphism?
\end{question}

\smallskip

A certain trend nowadays is to consider the so-called \emph{Alexandrov
surfaces with curvature bounded below}, instead of the classical convex
surfaces. Roughly speaking, the difference between these notions is twofold.
On the one hand, the curvature bound is no longer necessarily zero, and on the
other hand, the topology is no longer spherical. See \cite{BGP} for details.
In this context, it is natural to consider\emph{ abstract polyhedra} obtained
by gluing several polygons along their boundaries, in such a way that (1) the
gluing map is length preserving, (2) the resulting space is a (not necessarily
spherical) topological surface, and (3) the singular curvature at each vertex
is non-negative. It is easy to see that the theorem of this paper cannot be
generalized to such abstract polyhedra, for rectangle flat tori are Steinhaus.
As for projective planes, the main obstruction to be Steinhaus is purely topological.

\begin{proposition}
Any metric space homeomorphic to an even dimensional projective space admits
at least one point with more than one antipodes.
\end{proposition}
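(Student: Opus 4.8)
The plan is to argue by contradiction: assume $X$ is a metric space homeomorphic to the even-dimensional real projective space $\mathbb{RP}^{2n}$ on which the antipodal map $F$ is single-valued, and extract a contradiction from topology. The preliminary observations are soft. Since $X$ is compact, the function $q\mapsto d(p,q)$ attains its maximum, so $F_p$ is nonempty for every $p$; by assumption it is a single point, hence $F$ is a genuine self-map $F\colon X\to X$. As recalled in the introduction, $F$ is upper semicontinuous as a set-valued map, and an upper semicontinuous single-valued map is continuous; thus $F$ is continuous. Finally, $X$ is a manifold of positive dimension, so its diameter is strictly positive, and a farthest point from $p$ can never be $p$ itself: $F$ has no fixed point.

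The task therefore reduces to the purely topological statement that \emph{every continuous self-map of $\mathbb{RP}^{2n}$ has a fixed point}. I would deduce this from the Lefschetz fixed-point theorem. Since $X$ is homeomorphic to a closed topological manifold, it is a compact, finite-dimensional, metrizable absolute neighborhood retract, which is precisely the class of spaces for which the Lefschetz–Hopf theorem holds. Being an ANR is a topological invariant, so it is irrelevant that $X$ is only homeomorphic to $\mathbb{RP}^{2n}$ rather than equal to it; this lets one transport the computation below to $X$.

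The crux is the Lefschetz number $L(F)=\sum_i(-1)^i\,\mathrm{tr}\!\big(F^\ast\colon H^i(X;\mathbb{Q})\to H^i(X;\mathbb{Q})\big)$. The decisive algebraic-topological input is that $\mathbb{RP}^{2n}$ is rationally acyclic: its integral cohomology in every positive degree is $2$-torsion (in the top degree this reflects non-orientability), so $H^i(X;\mathbb{Q})=0$ for $i>0$ while $H^0(X;\mathbb{Q})=\mathbb{Q}$. Hence $F^\ast$ is the identity on $H^0$ and zero in all higher degrees, giving $L(F)=1\neq 0$. The Lefschetz theorem then forces a fixed point of $F$, contradicting the first paragraph and completing the proof.

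I expect the only delicate point — the ``main obstacle'' — to be the verification that the hypotheses of the Lefschetz theorem apply to an \emph{abstract} metric space merely homeomorphic to $\mathbb{RP}^{2n}$, rather than any deep difficulty: once one records that such a space is a compact ANR and that even-dimensional real projective space has the rational homology of a point, the contradiction is immediate. Note that both evenness and realness are essential here. For odd-dimensional real projective spaces the rational top cohomology is nonzero and the Lefschetz number can vanish, so the method breaks down; and the round $2$-sphere $S^2=\mathbb{CP}^1$, whose antipodal map is a single-valued involution, shows why one cannot replace real by complex projective space in the statement.
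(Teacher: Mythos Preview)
Your argument is correct and follows exactly the paper's approach: assume $F$ is single-valued hence continuous, invoke the Lefschetz fixed-point theorem for $\mathbb{RP}^{2n}$, and derive a contradiction from the impossibility of $F$ having a fixed point. You simply spell out in detail (the ANR hypothesis, the rational acyclicity of $\mathbb{RP}^{2n}$, and hence $L(F)=1$) what the paper compresses into the phrase ``even dimensional projective spaces have the fixed point property.''
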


\begin{proof}
Assume that $F$ is single-valued, and consequently continuous. Since, as a
consequence of Lefschetz fixed point theorem, even dimensional projective
spaces have the fixed point property, $F$ should have a fixed point, which is absurd.
\end{proof}

Still in the context of Alexandrov surfaces, it is natural to consider
abstract polyhedra whose faces are geodesic polygons of the unit sphere
(\emph{spherical polyhedra}), or of the hyperbolic plane (\emph{hyperbolic
polyhedra}).

\begin{question}
Except from the sphere, is there a Steinhaus spherical polyhedron? A
hyperbolic one? If yes, which are the admissible topologies?
\end{question}

\bigskip

\begin{acknowledgement}
The author was supported by the grant PN-II-ID-PCE-2011-3-0533 of the Romanian
National Authority for Scientific Research, CNCS-UEFISCDI. \smallskip

Special thanks are due to the anonymous referee for his or her useful
suggestions that strongly influenced the final form of this paper.
\end{acknowledgement}

\providecommand{\bysame}{\leavevmode\hbox to3em{\hrulefill}\thinspace}
\providecommand{\MR}{\relax\ifhmode\unskip\space\fi MR }
% \MRhref is called by the amsart/book/proc definition of \MR.
\providecommand{\MRhref}[2]{%
  \href{http://www.ams.org/mathscinet-getitem?mr=#1}{#2}
}
\providecommand{\href}[2]{#2}

\bigskip

\noindent Jo\"{e}l Rouyer

\noindent{\small Simion Stoilow Institute of Mathematics of the Romanian
Academy, \newline P.O. Box 1-764, Bucharest 70700, ROMANIA \newline
Joel.Rouyer@ymail.com}

\end{document}